\newtheorem{theorem}{Theorem}
\newtheorem{proposition}[theorem]{Proposition}
\newtheorem{lemma}[theorem]{Lemma}
\newtheorem{claim}[theorem]{Claim}
\newcommand{\R}{\mathbb{R}}
\newcommand{\N}{\mathbb{N}}
\newcommand{\C}{\mathbb{C}}
\newcommand{\Prob}[1]{\mathbb{P} \left( #1 \right) }
\newcommand{\abs}[1]{\left\vert #1 \right\vert}
\newcommand{\norme}[1]{\left\| #1 \right\| }
\newcommand{\scalar}[1]{\left\langle #1 \right\rangle }
\newcommand{\eps}{\varepsilon}
\renewcommand{\d}{\mathrm{d}}
\newcommand{\E}{\mathbb{E}}
\renewcommand{\P}{\mathbb{P}}
\newcommand{\reals}{\mathbb{R}}
\newcommand{\complexes}{\mathbb{C}}
\newcommand{\integers}{\mathbb{Z}}
\newcommand{\1}{\mathbf{1}}
\renewcommand{\S}{\mathcal{S}}
\DeclareMathOperator{\supp}{supp}
\DeclareMathOperator{\Span}{span}
\DeclareMathOperator{\Ker}{Ker}
\title{The density of imaginary multiplicative chaos is positive}
\author{Juhan Aru\thanks{École Polytechnique Fédérale de Lausanne, Lausanne, Switzerland; juhan.aru@epfl.ch} \and Antoine Jego\thanks{École Polytechnique Fédérale de Lausanne, Lausanne, Switzerland; antoine.jego@epfl.ch} \and Janne Junnila\thanks{University of Helsinki, Helsinki, Finland; janne.junnila@helsinki.fi}}
\date {}
\begin{document}

\renewcommand{\theparagraph}{\thesubsection.\arabic{paragraph}} 
\maketitle

\begin{abstract}
Consider a log-correlated Gaussian field $\Gamma$ and its associated imaginary multiplicative chaos $:e^{i \beta \Gamma}:$ where $\beta$ is a real parameter.
In \cite{aru2022density}, we showed that for any nonzero test function~$f$, the law of $\int f :e^{i \beta \Gamma}:$ possesses a smooth density with respect to Lebesgue measure on $\C$. In this note, we show that this density is strictly positive everywhere on $\C$. Our simple and direct strategy could be useful for studying other functionals on Gaussian spaces.
\end{abstract}

\section{Introduction}

Let $\Gamma$ be a logarithmically correlated Gaussian field on some domain $U \subset \reals^d$ whose covariance kernel $C$ (formally $C(x,y) = \E[\Gamma(x)\Gamma(y)]$, $x,y \in U$) can be written as
\begin{equation}\label{eq:logkernel}
C(x,y) = \log \frac{1}{|x-y|} + g(x,y), \quad x,y \in U,
\end{equation}
where $g \in H^{d+\varepsilon}_{\mathrm{loc}}(U \times U) \cap L^2(U \times U)$ for some $\eps > 0$, 
is symmetric ($g(x,y) = g(y,x)$) and bounded from above.
Throughout this article and as in \cite{aru2022density}, we will make the assumption that
\begin{equation}
\label{E:assumption}
    \star\text{ } \Gamma \text{ is nondegenerate in the sense that } C \text{ is an injective operator on } L^2(U).
\end{equation}
Let us now fix $\beta \in (0,\sqrt{d})$. For any $f \in L^\infty(U,\C)$ we may define the imaginary chaos $\mu$ tested against $f$ via the regularisation and renormalisation procedure
\[\mu(f) \coloneqq \lim_{\varepsilon \to 0} \int_U f(x) e^{i\beta \Gamma_\varepsilon(x) + \frac{\beta^2}{2} \E \Gamma_\varepsilon(x)^2} \, dx,\]
where $\Gamma_\varepsilon = \Gamma * \phi_\eps$ is a convolution approximation of $\Gamma$ against some smooth mollifier $\phi_\varepsilon = \eps^{-d} \phi(\cdot/\eps)$. The above limit takes place in $L^2$ and the resulting limiting random variable does not depend on the specific choice of mollifier \cite{JSW, complexGMC}. We will sometimes denote this random variable by $\int_U f : e^{i\beta \Gamma}:$, where $: e^{i\beta \Gamma}:$ stands for the Wick exponential of $i\beta \Gamma$.

In \cite{aru2022density} and under the above assumptions, we showed that for any nonzero $f \in C_c(U,\C)$, the law of $\mu(f)$ is absolutely continuous w.r.t. Lebesgue measure on $\C$ and the density is a Schwartz function\footnote{This result was stated for real-valued test functions in \cite{aru2022density} but extends readily to complex-valued test functions. See \cite[Proof of Theorem 3.2]{aru2024noise} for further explanations.}. The main result of the current paper shows that this density is everywhere positive:

\begin{theorem}\label{th:positivity}
Consider a nonzero test function $f \in C_c(U,\C)$.
Then for any $z_0 \in \C$, the limit
\begin{equation}
    \label{E:T_limit}
\lim_{r \to 0^+} r^{-2} \Prob{ |\mu(f) - z_0| < r}
\end{equation}
is strictly positive. In particular, the density of $\mu(f)$ is strictly positive everywhere.
\end{theorem}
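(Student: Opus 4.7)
My plan proceeds in three stages. First, a reduction: since by \cite{aru2022density} the density $p$ of $\mu(f)$ is Schwartz and in particular continuous, the Lebesgue differentiation theorem gives $\lim_{r\to 0^+} r^{-2}\P(|\mu(f) - z_0| < r) = \pi p(z_0)$, so the theorem is equivalent to $p(z_0) > 0$ for every $z_0 \in \C$.

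Second, I would use a multiscale decomposition of the field: write $\Gamma = \Gamma_T + \widetilde\Gamma_T$ as an independent sum (for instance via a white-noise scale decomposition), with $\Gamma_T$ a smooth Gaussian field having variance of order $T$ at every point, and $\widetilde\Gamma_T$ a log-correlated field carrying the complementary covariance. Since $:\!e^{i\beta(\Gamma_T+\widetilde\Gamma_T)}\!: \,=\, e^{i\beta\Gamma_T+\tfrac{\beta^2}{2}\E\Gamma_T^2}\,:\!e^{i\beta\widetilde\Gamma_T}\!:$, we obtain the factorisation
\[
    \mu(f) \,=\, \widetilde\mu_T(g_T), \qquad g_T(x) := f(x)\,e^{i\beta\Gamma_T(x)+\tfrac{\beta^2}{2}\E\Gamma_T(x)^2},
\]
where $\widetilde\mu_T$ denotes the imaginary chaos associated with $\widetilde\Gamma_T$. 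Crucially $\int|g_T|$ is of order $\int|f|\cdot e^{\beta^2 T/2}$, so the image of the deterministic map $\Gamma_T \mapsto \int g_T\;(=\E[\mu(f)\mid\Gamma_T])$, as $\Gamma_T$ varies over its topological support, is exactly the closed disk of radius $\int|f|\cdot e^{\beta^2 T/2}$ in $\C$. For $T$ sufficiently large this disk contains $z_0$; pick $\gamma^* \in C^\infty$ with $\int g_T(\gamma^*) = z_0$, and observe that the event $\{\Gamma_T \approx \gamma^*\}$ has positive Gaussian probability, on which the conditional mean $\int g_T$ of $\mu(f)$ lies close to $z_0$.

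Third, invoking the auxiliary claim that \emph{the density of any imaginary chaos $\mu(g)$ is strictly positive at its own mean $\int g$}, the conditional density of $\widetilde\mu_T(g_T)$ at $z_0$ is strictly positive on the above event, and averaging over $\Gamma_T$ yields $p(z_0) > 0$.

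The principal obstacle I foresee is precisely this last auxiliary claim. I would attempt to establish it by combining the chaos expansion $\mu(g) - \int g = \sum_{n\geq 1}\frac{(i\beta)^n}{n!}I_n(g^{\otimes n})$, whose first-chaos term $i\beta I_1(g)$ provides a nontrivial Gaussian contribution along the imaginary direction, with the non-degeneracy of the Malliavin covariance matrix of $\mu(g)$ already established in \cite{aru2022density} and a Malliavin-based representation of the density at a point. A secondary subtlety is to verify that the conditional density varies continuously enough in $\Gamma_T$ for the positivity on $\{\Gamma_T \approx \gamma^*\}$ to survive averaging; I would address this by propagating the quantitative small-ball estimates for imaginary chaos uniformly as $g_T$ ranges in a neighborhood of $g_T(\gamma^*)$.
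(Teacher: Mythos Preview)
Your decomposition $\Gamma = \Gamma_T + \widetilde\Gamma_T$ and the idea of steering the conditional mean $\int g_T$ to $z_0$ by choosing $\Gamma_T \approx \gamma^*$ is exactly the paper's strategy (their Proposition~\ref{P:intermediate} plays the role of your ``$\widetilde\Gamma_T$ is well-behaved with positive probability''). The divergence is in Stage~3, and there your proposal has a genuine gap.

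Your auxiliary claim --- that the density of $\widetilde\mu_T(g)$ is positive at its own mean $\int g$ --- is not a harmless special case: it is the theorem itself at one particular point, and the argument you sketch does not prove it. The first chaos $i\beta I_1(g)$ is indeed a nondegenerate two-dimensional Gaussian (when $\Re g,\Im g$ are linearly independent), but the higher-order chaoses $I_n(g^{\otimes n})$ are \emph{not independent} of $I_1(g)$, so the decomposition $\mu(g)-\int g = i\beta I_1(g)+\text{rest}$ does not give a convolution structure from which positivity at $0$ would follow. Nondegeneracy of the Malliavin covariance matrix yields smoothness of the density, never positivity; the existing Malliavin-calculus positivity criteria (Malliavin--Nualart, Bally--Caramellino, Ben~Arous--L\'eandre) are cited in the paper precisely to note that they do not apply here. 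So the ``Malliavin-based representation of the density at a point'' you invoke does not, as far as anyone knows, produce a lower bound.

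The paper circumvents this circularity by making the first-chaos intuition \emph{quantitative} in a different way. Rather than appealing to positivity of a conditional density, it reserves two explicit Cameron--Martin directions $h_1,h_2$, writes $\mu(f)=\Phi(A_1,A_2)$ as a smooth function of the two corresponding Gaussian coordinates (conditionally on $(A_n)_{n\ge 3}$), and chooses $h_1,h_2$ so that $D\Phi$ is invertible near a point mapping to $z_0$. The inverse function theorem then gives $\P(|\mu(f)-z_0|<r\mid A_n,n\ge 3)\ge cr^2$ directly on a positive-probability event, with no need for an a~priori positivity statement. Your first-chaos heuristic is morally this same derivative computation, but the paper's local-diffeomorphism packaging is what converts it into an actual lower bound.

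A minor additional point: your assertion that the image of $\gamma\mapsto\int f\,e^{i\beta\gamma+\frac{\beta^2}{2}\E\Gamma_T^2}$ is the full closed disk of radius $\|f\|_{L^1}e^{\beta^2 T/2}$ is itself a nontrivial deterministic lemma (the paper's Lemma~\ref{L:existence_a}), which requires finding a \emph{smooth} $\gamma^*$ with $\int f e^{i\beta\gamma^*}=0$; the paper devotes some care to this via a Brouwer fixed-point argument.
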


Note that the existence of the limit \eqref{E:T_limit} follows from the existence of a density for $\mu(f)$. The contribution of the current paper is to show that it does not vanish.
As a direct consequence of the positivity of the density at the origin, we now have a complete understanding of the blow-up of the moments of $|\mu(f)|$:
\[
\forall p \in (-2,+\infty), \quad \E[|\mu(f)|^p] < +\infty
\qquad \text{and} \qquad
\forall p \in (-\infty, -2], \quad \E[|\mu(f)|^p] = +\infty.
\]

\medskip

A particular case of interest is the imaginary chaos corresponding to the Gaussian field $\Gamma$ on the circle $\S^1$ whose covariance is given by
\[
\E[\Gamma(e^{i\theta}) \Gamma(e^{i \theta'})]
= - \log |e^{i\theta} - e^{i\theta'}|, \quad \theta, \theta' \in [0,2\pi],
\]
together with the test function $f \equiv 1$, i.e. the total mass of the corresponding imaginary chaos. This case is not covered by the theorem above, however we explain in Section \ref{SS:circle} how one can modify our arguments to treat this case too. This log-correlated Gaussian field gives rise to an exactly solvable real chaos \cite{MR2430565, MR4047550}: the Fyodorov--Bouchaud (FB) formula is an explicit expression for all the moments of the total mass of the real chaos and in fact determines its law. Moreover, the analytic continuation of the moment-formula to $\gamma = i\beta$ yields finite negative moments up to $-2/\beta^2$.
Now, in \cite{aru2022density}, we showed that the analytic continuation of this formula from the real case to the imaginary chaos cannot in general correspond to the $-1$th moment of the imaginary chaos. In that argument we made use of the negative moments of the absolute value of the total mass. Here, our result implies that, for the absolute value, moments of order $p \leq -2$ blow up. Still, given the FB formula, one may wonder whether thanks to some cancellations it may or may not be possible to make sense of the negative moments $\E[\mu(f)^p]$ without the absolute values for some values of $p \leq -2$.

\medskip

Understanding the density of imaginary chaos is of importance in studying the properties of imaginary chaos itself \cite{aru2022density, aru2024noise} but, as just explained, also has implications for related objects like real multiplicative chaos \cite{aru2022density} and possibly also continuum limits of spin models \cite{JSW}.
More widely, the problem of proving existence and positivity of densities of functionals on Gaussian spaces can be put in the wider context of Malliavin calculus \cite{Nualart}. In particular, there are known conditions for obtaining positivity for Wiener functionals using Mallivan calculus, e.g. \cite{MalliavinNualart2009, bally2013positivity, arous1991annulation}. None of these, nor small modifications thereof seem to apply in our concrete setting, hence we propose a new, simple and direct general strategy that could potentially apply in other contexts too; see Section~\ref{SS:strategy} below. 

\medskip

As a possibly interesting side-result and recalling that one can make sense of $:e^{i\beta \Gamma}:$ as a random element of $H^{-d/2-\eps}(\R^d)$ for any $\eps>0$ \cite[Theorem 1.1]{JSW}, we prove that morally the support of imaginary chaos is all of $H^{-d/2-\eps}$:

\begin{proposition}\label{P:intermediate}
Let $\eps>0$, $f \in L^\infty(U,\C)$ not identically zero and $K \subseteq U$ a compact subset of $U$.
For any $\eta>0$, the probability
\[
\Prob{ \|1_K(f :e^{i \beta \Gamma}: - 1)\|_{H^{-d/2-\eps}(\R^{d})} \leq \eta }
\]
is strictly positive.
\end{proposition}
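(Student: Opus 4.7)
The plan is to expand the $H^{-d/2-\eps}(\R^d)$ norm using an orthonormal basis of the dual Sobolev space and to combine a joint positive-density statement for finite-dimensional projections with a tail bound via Markov.

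Let $(\phi_k)_{k \geq 1}$ be an orthonormal basis of $H^{d/2+\eps}(\R^d)$ and set $X_k := \mu(1_K \phi_k f) - \int 1_K \phi_k \in \C$. By the duality pairing,
\begin{equation*}
\bigl\|1_K(f :e^{i\beta\Gamma}: - 1)\bigr\|^2_{H^{-d/2-\eps}(\R^d)} = \sum_{k \geq 1} |X_k|^2.
\end{equation*}
A standard covariance computation, exploiting that the covariance of $f :e^{i\beta\Gamma}:$ is $f(x)\overline{f(y)}\,e^{\beta^2 C(x,y)}$ and locally integrable for $\beta^2 < d$, yields $\sum_k \E|X_k|^2 = \E\|1_K(f:e^{i\beta\Gamma}:-1)\|^2_{H^{-d/2-\eps}} < \infty$, so in particular $\sum_{k > N}\E|X_k|^2 \to 0$ as $N \to \infty$.

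The central technical input is that for each $N \geq 1$, the joint law of $(X_1, \ldots, X_N) \in \C^N$ admits a smooth density that is positive at the origin. Existence and smoothness of the joint density extend the Malliavin-calculus argument of \cite{aru2022density} to finite families of test functions, while positivity at the origin is the natural multivariate analogue of Theorem \ref{th:positivity} and should follow by adapting the ``simple and direct'' strategy of Section \ref{SS:strategy} to higher-dimensional targets. Granting this, for every $\delta > 0$,
\begin{equation*}
\P\bigl(\max_{k \leq N}|X_k| \leq \delta\bigr) > 0.
\end{equation*}
Given $\eta > 0$, we set $\delta := \eta/(2\sqrt{N})$ and choose $N$ large enough that $\sum_{k > N}\E|X_k|^2 \leq (\eta^2/4)\,\P(\max_{k \leq N}|X_k| \leq \delta)$. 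By Markov, the tail event $\{\sum_{k > N}|X_k|^2 \leq \eta^2/2\}$ then has probability at least $1 - \tfrac{1}{2}\P(\max_{k \leq N}|X_k| \leq \delta)$, so its intersection with the head event $\{\max_{k \leq N}|X_k| \leq \delta\}$ is of positive probability; on this intersection $\sum_k |X_k|^2 \leq \eta^2/2 + N\delta^2 = 3\eta^2/4 \leq \eta^2$, yielding the claim.

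The main obstacle is the joint positivity in the third paragraph. Extending the existence of a smooth joint density requires uniform nondegeneracy of the Malliavin covariance matrix of $(X_k)_{k \leq N}$, which is a natural generalisation of the one-dimensional argument in \cite{aru2022density}. The positivity, however, requires adapting the new strategy for Theorem \ref{th:positivity} to multi-dimensional target spaces, and this is the nontrivial part of the proof.
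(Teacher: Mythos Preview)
Your proposal contains a logical circularity. In the paper's architecture, Proposition~\ref{P:intermediate} is an \emph{input} to the proof of Theorem~\ref{th:positivity}: the stability step of the strategy in Section~\ref{SS:strategy} (carried out in the proof of Lemma~\ref{L:Phi}) applies Proposition~\ref{P:intermediate} to the tail field $\sum_{n\ge 4}A_n h_n$ in order to guarantee that the random map $\Phi$ remains close to the deterministic diffeomorphism $\varphi_0$ with positive probability. Your ``central technical input'' is positivity at the origin of the joint density of $(X_1,\dots,X_N)$, which you propose to obtain by adapting that same strategy to $\C^N$-valued targets; but any such adaptation would again need a version of Proposition~\ref{P:intermediate} for its stability step. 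You are therefore assuming (a strengthening of) the statement you set out to prove.

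There is also a quantitative gap in the head/tail balancing. Positivity of a joint density at $0\in\C^N$ only yields a lower bound of order $\delta^{2N}$ for $\P(\max_{k\le N}|X_k|\le\delta)$; with $\delta=\eta/(2\sqrt N)$ this decays super-exponentially in $N$, while the tail $\sum_{k>N}\E|X_k|^2$ carries no quantitative rate. There is no reason the inequality $\sum_{k>N}\E|X_k|^2\le(\eta^2/4)\,\P(\max_{k\le N}|X_k|\le\delta)$ should hold for some $N$, since both sides depend on $N$ and the right-hand side may vanish faster.

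The paper avoids both issues by never invoking any density result. It splits $\Gamma=\Gamma_T+\Gamma_{T,\infty}$ via the almost $\star$-scale invariant decomposition into a smooth truncation $\Gamma_T$ and a tail whose chaos is close to $1$ in $H^{-d/2-\eps}$ in $L^2$ for $T$ large. The only nontrivial input is the deterministic Lemma~\ref{L:existence_a}: it produces a smooth $a$ with $\int f\,e^{i\beta a+\beta^2\E[\Gamma_T^2]/2}=1$, and since the Cameron--Martin space of the smooth field $\Gamma_T$ contains $C_c^\infty$, the event $\{\Gamma_T\approx a\}$ has positive probability and forces $f:e^{i\beta\Gamma_T}:\approx 1$. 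A Markov argument on the conditional expectation then concludes. This is logically prior to Theorem~\ref{th:positivity}, as it must be.
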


This note is structured as follows. Our general strategy is explained in the following subsection, and we deal with the rigorous set-up in Section \ref{S:setup}. Section \ref{S:proofs} contains the proofs of our main results. We will start in Section \ref{SS:deterministic} by proving a deterministic result which guarantees the existence, for any given function $f$, of some smooth oscillating function $e^{i \beta a(\cdot)}$ such that the integral $\int f e^{i \beta a}$ takes the desired value. We will then move to the proof of Theorem \ref{th:positivity} in Section \ref{SS:proof_T}, assuming Proposition~\ref{P:intermediate}. We will finally prove Proposition \ref{P:intermediate} in Section \ref{SS:proof_P}.

\paragraph*{Acknowledgements}
J.A. and A.J. are supported by Eccellenza grant 194648 of the Swiss National Science Foundation and are members of NCCR Swissmap. J.J. is supported by The Finnish Centre of Excellence (CoE) in Randomness and Structures and was a member of NCCR Swissmap. The authors thank the anonymous referees for their careful readings and suggestions.

\subsection{High level strategy}\label{SS:strategy}

Let $f \in C_c(U,\C)$ be a nonzero test function and $z_0 \in \C$. To bound from below the probability that $|\mu(f)-z_0| < r$, we will use the following high level strategy. This strategy could be proved useful in other contexts, especially in settings that can be studied with Malliavin calculus.
\begin{itemize}[leftmargin=*]
    \item We find an orthonormal basis $(h_n)_{n \geq 1}$ of the Cameron--Martin space $H_\Gamma$ of $\Gamma$ (see Section~\ref{S:setup} for the definition of $H_\Gamma$), which may depend on $z_0$ and $f$, and such that the following holds. Decomposing $\Gamma = \sum_{n \geq 1} A_n h_n$ where $A_n$, $n \geq 1$, are i.i.d. standard Gaussian random variables, we can view the random variable $\mu(f)$ as a function of $(A_n)_{n \geq 1}$:
    \[
    \mu(f) = \psi(A_n, n \geq 1), \qquad \text{with} \qquad \psi : \R^\N \longmapsto \C.
    \]
    \item We find $n_0 \geq 2$ and $a_1, \dots, a_{n_0} \in \R$ such that the map
    \[
    \varphi_0 : (u_1,u_2) \in \R^2 \longmapsto \E[ \psi(a_1 + u_1, a_2 + u_2, a_3, \dots, a_{n_0}, A_{n_0+1}, A_{n_0+2}, \dots ) ]
    \]
    satisfies $\varphi_0(0,0)=z_0$ and $\varphi_0: B \to \varphi_0(B)$ is a diffeomorphism for some neighbourhood $B$ of $(0,0)$.
    \item We show that the above properties are stable in the following sense. There exists an event $E \in \sigma(A_3, A_4, \dots)$ with positive probability which informally requires $A_n$ to be close to $a_n$ for $n=3, \dots, n_0$, and $A_n$ to have a typical behaviour for $n \geq n_0+1$ and such that the following holds. On the event $E$, the map
    \[
    \Phi : (u_1, u_2) \in \R^2 \longmapsto \psi(a_1+u_1,a_2+u_2, A_3, A_4, \dots)
    \]
    satisfies $\Phi(\mathbf{u})=z_0$ for some (random) $\mathbf{u} \in B/2$ and $\Phi: B \to \Phi(B)$ is a diffeomorphism. Moreover, on the event $E$, the determinant of the derivative map $D\Phi$ is uniformly bounded from above and below on the set $B$ by positive deterministic constants.
    \item We conclude by noticing that, on the event $E$, we have
    \[
    \Prob{|\mu(f)-z_0| < r \vert A_n, n \geq 3}
    = \Prob{ (A_1-a_1, A_2-a_2) \in \Phi^{-1}(B(z_0,r)) \vert A_n, n \geq 3} \geq c r^2.
    \]
\end{itemize}

For technical reasons, we actually use a slight variant of the above strategy but, roughly speaking, the above steps correspond to the following intermediate results.
Finding an orthonormal basis and real numbers $a_1, \dots, a_{n_0} \in \R$ such that $\varphi_0(0,0) = z_0$ is the content of the deterministic Lemma~\ref{L:existence_a}. The fact that $\varphi_0$ is a diffeomorphism on some ball centred at $(0,0)$ is proved in the proof of Lemma \ref{L:Phi}. The stability step is contained in (the proof of) Proposition~\ref{P:intermediate}.

\medskip

\subsection{Setup}\label{S:setup}

We recall some basic facts concerning the log-correlated Gaussian field $\Gamma$.
Note that its covariance operator $C$ defines a Hilbert--Schmidt operator on $L^2(U)$, and hence $C$ is self-adjoint and compact.
Since $C$ is positive definite, by the spectral theorem there exists a nonincreasing sequence of strictly positive eigenvalues $\lambda_1 \ge \lambda_2 \ge \dots > 0$ and corresponding orthogonal eigenfunctions $(f_k)_{k \ge 1}$ spanning the subspace $L \coloneqq (\Ker C)^\bot$ in $L^2(U)$ (which agrees with $L^2(U)$ under our assumption \eqref{E:assumption}). We may now construct the log-correlated field $\Gamma$ via its Karhunen--Lo\`eve expansion
\begin{equation}\label{eq:logfieldexpansion}
\Gamma = \sum_{k \ge 1} A_k C^{1/2} f_k = \sum_{k \ge 1} A_k \sqrt{\lambda_k} f_k,
\end{equation}
where $(A_k)_{k \ge 1}$ is an i.i.d. sequence of standard normal random variables. It has been shown in \cite[Proposition~2.3]{JSW} that the above series converges in $H^{-\varepsilon}(\reals^d)$ for any fixed $\varepsilon > 0$ (extending the relevant functions/field by 0 outside of $U$).

From the KL-expansion one can see that heuristically $\Gamma$ is a standard Gaussian on the space $H \coloneqq C^{1/2} L$. The space $H$ is called the \emph{Cameron--Martin space} of $\Gamma$, and it becomes a Hilbert space by endowing it with the inner product $\langle f, g \rangle_H = \langle C^{-1/2} f, C^{-1/2} g \rangle_{L^2}$, where $C^{-1/2} f, C^{-1/2} g \in L$. This definition makes sense since $C^{1/2}$ is an injection on $L$. 
Alternatively, the space $H$ is the space of distributions $f$ such that $\Gamma + f$ is absolutely continuous with respect to $\Gamma$; see for instance \cite[Section 1.9]{berestycki2024gaussian} in the case of the 2D GFF.

\medskip

We now record a lemma concerning Cameron--Martin spaces for ease of future reference.

\begin{lemma}\label{L:CM}
    Let $K \subset U$ be any compact subset of $U$. There exists a Gaussian field $\Gamma'$ defined on $\R^d$ such that $\Gamma \overset{(d)}{=} \Gamma'$ in $K$ and such that the Cameron--Martin space of $\Gamma'$ contains $C^\infty_c(\R^d)$.
\end{lemma}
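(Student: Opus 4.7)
The strategy is to define $\Gamma'$ on $\R^d$ as a sum $\Gamma' = \Gamma^K + (1-\chi)\eta$ of two independent Gaussian fields: the first reproduces the law of $\Gamma$ on $K$, while the second injects smooth functions into the Cameron--Martin space. Fix a cutoff $\chi\in C^\infty_c(U;[0,1])$ with $\chi\equiv 1$ on a neighbourhood of $K$, and let $\Gamma^K$ be the centred Gaussian field on $\R^d$ whose covariance is the positive semi-definite kernel $(x,y)\mapsto \chi(x)\chi(y)C(x,y)$; the positivity follows from
\[
\iint \chi(x)\chi(y)C(x,y)\phi(x)\phi(y)\,dx\,dy = \iint_{U\times U} C(x,y)(\chi\phi)(x)(\chi\phi)(y)\,dx\,dy \ge 0.
\]
Since $\chi\equiv 1$ on $K$, $\Gamma^K|_K \stackrel{(d)}{=} \Gamma|_K$. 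Independently of $\Gamma^K$, let $\eta$ be the centred log-correlated Gaussian field on $\R^d$ with covariance $c_d (I-\Delta)^{-d/2}(x,y)$, whose Cameron--Martin space equals $H^{d/2}(\R^d)\supset C^\infty_c(\R^d)$. Setting $\Gamma':=\Gamma^K + (1-\chi)\eta$, the factor $(1-\chi)$ vanishes on $K$, so $\Gamma'|_K \stackrel{(d)}{=}\Gamma|_K$.

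To verify $C^\infty_c(\R^d)\subset H_{\Gamma'}$, recall that for independent centred Gaussians $X$ and $Y$ one has $H_{X+Y}\supset H_X + H_Y$. A direct computation based on the factorisations $\mathrm{Cov}(\Gamma^K) = (M_\chi C^{1/2})(M_\chi C^{1/2})^*$ and $\mathrm{Cov}((1-\chi)\eta) = (M_{1-\chi}C_\eta^{1/2})(M_{1-\chi}C_\eta^{1/2})^*$ shows that
\[
\chi\cdot H_\Gamma\subset H_{\Gamma^K},\qquad (1-\chi)\cdot H^{d/2}(\R^d)\subset H_{(1-\chi)\eta}.
\]
Given $\psi\in C^\infty_c(\R^d)$, decompose $\psi=\chi\psi+(1-\chi)\psi$. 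The summand $(1-\chi)\psi$ lies in $(1-\chi)\cdot H^{d/2}(\R^d)$ since $\psi\in H^{d/2}(\R^d)$. For $\chi\psi$, pick a further cutoff $\tilde\chi\in C^\infty_c(U)$ with $\tilde\chi\equiv 1$ on $\mathrm{supp}(\chi)$, so that $\chi\psi = \chi\cdot(\tilde\chi\psi)$ with $\tilde\chi\psi\in C^\infty_c(U)$. This reduces everything to showing $C^\infty_c(U)\subset H_\Gamma$.

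This last inclusion is the main technical obstacle. Morally, the covariance operator $C$ is pseudo-differential of order $-d$ (its leading singularity $\log\frac{1}{|x-y|}$ has Fourier symbol proportional to $|\xi|^{-d}$), so $C^{-1/2}$ is of order $d/2$ and maps smooth compactly supported functions to $L^2(U)$, placing $C^\infty_c(U)$ into $C^{1/2}(L^2(U))=H_\Gamma$. Rigorously, one compares $C$ with the reference kernel $c_d(I-\Delta)^{-d/2}$ (same $\log\frac{1}{|x-y|}$ leading term with a smooth correction); the $H^{d+\eps}_{\mathrm{loc}}$ regularity of $g$ implies that the two CM spaces differ only by a smoothing perturbation, so the explicit inclusion $C^\infty_c(\R^d)\subset H^{d/2}(\R^d)$ transfers to $C^\infty_c(U)\subset H_\Gamma$ via a Mercer-type spectral argument.
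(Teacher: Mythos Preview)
Your construction of $\Gamma' = \Gamma^K + (1-\chi)\eta$ is a reasonable ansatz and the reduction steps are fine up until the last paragraph. The genuine gap is precisely the step you flag as ``the main technical obstacle'': showing $C^\infty_c(U)\subset H_\Gamma$ for the original field. What you write there is a heuristic (``morally pseudo-differential of order $-d$'', ``differ only by a smoothing perturbation'', ``via a Mercer-type spectral argument'') and not a proof. Turning this into a rigorous statement under the sole assumption $g\in H^{d+\varepsilon}_{\mathrm{loc}}\cap L^2$ is delicate: $C$ is not a classical $\Psi$DO, the correction $g$ is only mildly regular, and you would need uniform two-sided symbol bounds to control $C^{-1/2}$ on compactly supported smooth functions. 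None of this is supplied. Note also that your argument never invokes the nondegeneracy assumption~\eqref{E:assumption}, which is suspicious: without it $C$ can have a nontrivial kernel and your pseudo-differential inversion picture breaks down.

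By contrast, the paper sidesteps this analytic difficulty entirely. It invokes a structural decomposition (Theorem~4.5 of \cite{aru2022density}, which uses nondegeneracy) to write $\Gamma\overset{(d)}{=}L+R$ on $K$, where $L$ is an almost $\star$-scale invariant field defined on all of $\R^d$ and $R$ is independent and H\"older. For the explicit field $L$ one can verify $C^\infty_c(\R^d)\subset H_L$ directly from its seed covariance (Lemma~4.8 of \cite{aru2022density}), and then $H_{L+R}\supset H_L$ gives the result. In other words, the paper proves the needed Cameron--Martin inclusion for one concrete model field and transfers it, rather than attempting a general $\Psi$DO argument for arbitrary log-correlated $\Gamma$. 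Your approach could in principle work, but as written the crucial inclusion is asserted rather than established.
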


We would like to point out that it can be the case that two fields $\Gamma$ and $\Gamma'$ have the same law when restricted to $K$, but the  subsets $\{ f \in H_\Gamma : \supp f \subset K \}$ and $\{ f \in H_{\Gamma'} : \supp f \subset K \}$ of the Cameron--Martin spaces of $\Gamma$ and $\Gamma'$ do not agree. For an example consider two independent standard Gaussians $X$ and $Y$, the fields $(X, Y)$ and $(X,X)$ and as $K$ the first coordinate. In the case of $(X,Y)$, the whole Cameron--Martin space is spanned by $(1,0)$ and $(0,1)$, so its subset of ``functions'' supported by the first coordinate is simply $\R$. In the second case, the Cameron--Martin space is spanned by $(1,1)$ so the only ``function'' with support in the first coordinate is 0.
Lemma~\ref{L:CM} was stated in this way because of this counter-intuitive property.

\begin{proof}
By \cite[Theorem 4.5]{aru2022density} and because $\Gamma$ is nondegenerate (see \eqref{E:assumption}), we may decompose in $K$, $\Gamma = L+R$
as the sum of a Hölder continuous field $R$ and an independent almost $\star$-scale invariant field $L$ whose covariance equals
\[\E[L(x) L(y)] = \int_{0}^\infty k(e^u(x-y))(1 - e^{-\delta u}) \, du.\]
Here, $\delta>0$ is a parameter and $k : \R^d \to [0,\infty)$ is a rotationally symmetric seed covariance with $k(0)=1$, $\supp k \subset B(0,1)$ and such that
\begin{equation}\label{E:seed_k}
    \exists s > \frac{d+1}{2}, \quad \forall \xi \in \R^d, \quad 0 \leq \hat{k}(\xi) \leq (1+ |\xi|^2)^{-s}.
\end{equation}
By \cite[Lemma 4.8]{aru2022density}, the Cameron--Martin space of $L$ contains $C^\infty_c(\R^d)$. By \cite[Lemma~4.1]{aru2022density}, this implies that the Cameron--Martin space of $L+R$ also contains $C^\infty_c(\R^d)$ concluding the proof.
\end{proof}

\section{Proofs}\label{S:proofs}

\subsection{A deterministic result}\label{SS:deterministic}

\begin{lemma}\label{L:existence_a}
Let $f \in L^1(\R^d,\C)$ not identically zero. For all $z_0 \in \C$ with $|z_0| < \norme{f}_{L^1}$, 
there exists a function $a \in C^\infty(\R^d,\R)$ 
such that
\begin{equation}
    \label{E:L_existence_a}
\int f(x) e^{i\beta a(x)} \, dx = z_0
\end{equation}
and such that $\beta a + \arg(f)$ is not constant in the support of $f$.
\end{lemma}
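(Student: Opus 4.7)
First observe that the non-constancy condition is automatic: if $\beta a + \arg f$ were constant equal to some $c\in\R$ on $\{f \neq 0\}$, then $f\,e^{i\beta a} = |f|\,e^{ic}$ a.e., so $\int f e^{i\beta a} = e^{ic}\norme{f}_{L^1}$ would have modulus exactly $\norme{f}_{L^1}$, contradicting $|z_0| < \norme{f}_{L^1}$. So it suffices to find $a \in C^\infty(\R^d,\R)$ solving~\eqref{E:L_existence_a}.

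The strategy is to reduce to a finite-dimensional Minkowski-sum-of-circles problem. Fix a small $\eta>0$ and partition a sufficiently large bounded region of $\R^d$ into Borel cells $E_1,\dots,E_n$ such that the complex weights $v_j \coloneqq \int_{E_j} f$ satisfy $\sum_j |v_j| \geq \norme{f}_{L^1}-\eta$ (using the total-variation characterisation $\norme{f}_{L^1}=\sup_{(E_j)}\sum_j |\int_{E_j} f|$) and $\max_j |v_j|\leq \eta$ (by refining until each cell carries little $|f|$-mass). Consider the map $\Phi:\R^n\to\C$, $\Phi(c) = \sum_j v_j e^{i\beta c_j}$. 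Its image is the Minkowski sum $\sum_j |v_j|\,\mathbb{S}^1$, which equals the full closed disk of radius $\sum_j |v_j|$ as soon as no $|v_j|$ exceeds the sum of the others. For $\eta < (\norme{f}_{L^1}-|z_0|)/3$ this condition holds and $|z_0|<\sum_j |v_j|$, so there exists $c^*\in \R^n$ with $\Phi(c^*)=z_0$. By exploiting the slack in the choice of $c^*$ (the fibre $\Phi^{-1}(z_0)$ is positive-dimensional as soon as $n\geq 3$), one can further arrange that two of the vectors $v_j e^{i\beta c_j^*}\in\C$, say for $j=1,2$, are $\R$-linearly independent.

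To pass from this piecewise-constant prescription back to a smooth function, I would take a smooth partition of unity $(\chi_j^{(\tau)})_{j=1}^n$ with $\chi_j^{(\tau)}$ concentrating on $E_j$ as $\tau\to 0$, and set $a_\tau(c)(x) \coloneqq \sum_j c_j \chi_j^{(\tau)}(x)$. By dominated convergence, $\int f e^{i\beta a_\tau(c)}\,dx \to \Phi(c)$ as $\tau\to 0$, uniformly for $c$ in a compact neighbourhood of $c^*$, together with the derivatives in $(c_1,c_2)$. Hence for $\tau$ small enough, the quantitative inverse function theorem applied to $(c_1,c_2)\mapsto \int f e^{i\beta a_\tau(c_1,c_2,c_3^*,\dots,c_n^*)}\,dx$ yields an exact solution to \eqref{E:L_existence_a} near $c^*$, producing the required smooth $a$. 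The main subtlety I anticipate is the verification of Jacobian non-degeneracy at $c^*$: this is precisely where the strict inequality $|z_0|<\norme{f}_{L^1}$ is used, forcing two cells to contribute phases that are $\R$-linearly independent in $\C$.
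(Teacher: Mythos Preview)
Your argument is correct and takes a genuinely different route from the paper's. Both proofs share the same endgame---produce an approximate phase and then correct it via a two-parameter family and the inverse function theorem (the paper uses Brouwer's fixed-point theorem for this last step)---but they differ in how the approximate solution is built. The paper first treats $z_0=0$ by an explicit one-dimensional trick, setting $\tilde a(x)=b(x_d)$ where $\beta b$ is the cumulative $|f|$-mass rescaled to run from $0$ to $2\pi$, so that $\int |f|\,e^{i\beta\tilde a}=\frac{\norme{f}_{L^1}}{2\pi i}(e^{2\pi i}-1)=0$; general $z_0$ is then reached by interpolating between this and a mollification of $-\arg(f)/\beta$. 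Your discretisation combined with the polygon/Minkowski-sum picture handles all $|z_0|<\norme{f}_{L^1}$ uniformly, with no special case at the origin, which is arguably cleaner; the paper's construction is in turn more explicit. Two small points worth tightening: choose the cells $E_j$ with $|f|$-null boundaries (e.g.\ dyadic cubes) so that the concentrating partition of unity really recovers $\mathbf{1}_{E_j}$ a.e.; and for the Jacobian non-degeneracy it is simplest to \emph{prescribe} $c_1^*,c_2^*$ first so that $v_1 e^{i\beta c_1^*}$ and $v_2 e^{i\beta c_2^*}$ are orthogonal, then solve $\sum_{j\ge 3} v_j e^{i\beta c_j}=z_0-v_1 e^{i\beta c_1^*}-v_2 e^{i\beta c_2^*}$, whose right-hand side has modulus at most $|z_0|+2\eta<\sum_{j\ge 3}|v_j|$ once $\eta$ is small enough.
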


We emphasise that the function $a$ is required to be smooth.
Of course, if $f$ has compact support within some open domain $U$, then one can also require $a$ to have compact support in $U$.

In general, this result cannot be extended to $|z_0| \geq \norme{f}_{L^1}$. Indeed, by the triangle inequality, one cannot find such a function if $|z_0| > \norme{f}_{L^1}$. If $|z_0| = \norme{f}_{L^1}$ and if \eqref{E:L_existence_a} holds for some function $a$, then $f e^{i \beta a}$ agrees with $|f| z_0/|z_0|$ Lebesgue-almost everywhere. In general this equality cannot be achieved by a continuous function $a$.

\begin{proof}
Let us first assume the existence of such a function $a_0$ when $z_0 = 0$ (this is actually the difficult part of the proof). Let $z_0 \in \C$ with $0 < |z_0| < \norme{f}_{L^1}$.
Let $v = -\arg(f)/\beta$ where we arbitrarily define $\arg(f)=0$ when $f=0$.
By construction, $f e^{i \beta v} = |f|$. Let $\rho_\eps = \eps^{-d} \rho(\cdot/\eps)$, $\eps \geq 0$, be a sequence of smooth mollifiers. Define $b_\eps = (1-\eps) v * \rho_\eps + \eps a_0$. By dominated convergence theorem, the map
\[
\eps \in [0,1] \longmapsto \abs{ \int f(x) e^{i \beta b_\eps(x)} \, dx}
\]
is continuous, equal to $\norme{f}_{L^1}$ when $\eps =0$ and equal to 0 when $\eps =1$. Therefore, there exists some $\eps \in (0,1)$ such that the integral of $f e^{i \beta b_\eps}$ has the same modulus as $z_0$. 
Setting $a \in C^\infty(\R^d,\R)$ to be equal to $b_{\eps}(\cdot) + \theta$ on the support of $f$, where $\theta \in \R$ appropriately changes the phase, we obtain that
 $\int f e^{i \beta a} = z_{0}.$

We now treat the delicate case of $z_0 = 0$.
We will first find a continuous function $\tilde{a}$ such that $\int |f| e^{i \beta \tilde{a}} = 0$.
The function $\hat{a} := \arg(f)/\beta + \tilde a$ will thus satisfy $\int f e^{i \beta \hat{a}} = 0$ but will not have the desired regularity.
We will then show that there is a smooth perturbation $a$ of $\hat{a}$ which has the same property.
We define
\[\tilde{a}(x_1,\dots,x_d) = b(x_d) = \frac{2\pi}{\beta \norme{f}_{L^1}} \int_{-\infty}^{x_d} \int_{\reals^{d-1}} |f(u_1,\dots,u_{d-1},u_d)| \, d u_1 \dots d u_d.\]
We have
\[
\lim_{x_d \to +\infty} b(x_d) = 2\pi/\beta
\quad \text{and} \quad \lim_{x_d \to -\infty} b(x_d) = 0.
\]
Hence, 
\[\int_{\reals^d} |f(x)| e^{i \beta \tilde{a}(x)} \, dx = \frac{\beta \norme{f}_{L^1}}{2\pi} \int_{-\infty}^\infty b'(x_d) e^{i \beta b(x_d)} \, dx_d = \frac{\beta \norme{f}_{L^1}}{2 \pi} \cdot \frac{e^{2 \pi i} - 1}{i\beta} = 0\]
as desired.

To define the perturbation $a$ of $\hat a$, notice first that, since the phase of $f e^{i \beta \hat a}$ is not constant, there exist smooth functions $g_1, g_2 \in C^\infty(\R^d,\R)$ such that
\[\int f(x) g_j(x) e^{i\beta \hat{a}(x)} = \theta_j, \quad j=1,2,\]
where $\theta_1,\theta_2$ are two complex numbers with modulus one which are not linearly correlated.
We will make an ansatz for $a$ of the form
\[a(x) = (\rho_{\varepsilon} * \hat{a})(x) + s_1 g_1(x) + s_2 g_2(x),\]
where $\rho_{\varepsilon} = \varepsilon^{-d} \rho(\cdot/\varepsilon)$ is a sequence of standard smooth mollifiers and $s_j \in \reals$, $j = 1,2$, are some parameters.
Let us now look at the family of smooth maps $(\eta_{\varepsilon})_{\eps\geq 0}$ given by
\[\eta_{\varepsilon} : \mathbf{s} = (s_1,s_2) \in \R^2 \longmapsto \int f(x) e^{i\beta (\rho_{\varepsilon} * \hat{a})(x) + s_1 g_1(x) + s_2 g_2(x)} \, dx \in \C,\]
where we used the convention that $\rho_0 * \hat{a} = \hat{a}$. Note that $\eta_\eps \to \eta_0$ uniformly on $\R^2$ by elementary inequalities and the convergence $\rho_\eps * \hat{a} \mathbb{1}_{f \ne 0} \to \hat{a} \mathbb{1}_{f \ne 0}$ in $L^1(\R^d)$.
To conclude the proof it is enough to find some small $\varepsilon > 0$ and $\mathbf{s} \in \reals^2$ such that $\eta_\varepsilon(\mathbf{s}) = 0$.
Note that $\eta_0$ is a map with
$
\eta_0(0,0) = 0, \partial_1 \eta_0(0,0) = \theta_1, \partial_2 \eta_0(0,0) = \theta_2.
$
In particular, $D\eta_0(0,0)$ is invertible and, by the inverse function theorem, there exists a neighbourhood $B(0,\tilde{R})$ of $(0,0)$ such that $\eta_0$ is invertible with inverse $h \colon \eta_0(B(0,\tilde{R})) \to \reals^2$.
Let us fix $R > 0$ such that $B(0,R) \subset \eta_0(B(0,\tilde{R}))$ and consider $r > 0$ for which $\eta_0(\overline{B(0,r)}) \subset B(0,R/2)$.
We next note the following simple corollary of Brouwer fixed-point theorem:

\begin{lemma}\label{L:Brouwer}
Let $F : \overline{B(0,r)} \to \reals^2$ be a continuous function such that $|F(\mathbf{s}) - \mathbf{s}| \le r$ for all $\mathbf{s} \in \overline{B(0,r)}$.
Then $F(s) = 0$ for some $\mathbf{s} \in \overline{B(0,r)}$.
\end{lemma}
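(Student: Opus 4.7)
My plan is to deduce the lemma directly from Brouwer's fixed-point theorem on the closed disk $\overline{B(0,r)} \subset \R^2$ by reformulating the search for a zero of $F$ as a search for a fixed point of an auxiliary self-map. Concretely, I would introduce $G \colon \overline{B(0,r)} \to \R^2$ defined by $G(\mathbf{s}) := \mathbf{s} - F(\mathbf{s})$. Continuity of $G$ is inherited from that of $F$, and the hypothesis $|F(\mathbf{s}) - \mathbf{s}| \le r$ translates exactly into $|G(\mathbf{s})| \le r$ for every $\mathbf{s} \in \overline{B(0,r)}$, so that $G$ is a continuous self-map of $\overline{B(0,r)}$.

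Brouwer's fixed-point theorem then provides some $\mathbf{s}^* \in \overline{B(0,r)}$ with $G(\mathbf{s}^*) = \mathbf{s}^*$. Rewriting this identity as $\mathbf{s}^* - F(\mathbf{s}^*) = \mathbf{s}^*$ immediately gives $F(\mathbf{s}^*) = 0$, which is the desired conclusion. No genuine obstacle appears in this argument: the only substantive input is Brouwer, and the rest is the algebraic identification of zeros of $F$ with fixed points of $\mathbf{s} \mapsto \mathbf{s} - F(\mathbf{s})$, together with the observation that the quantitative bound in the hypothesis is precisely what is needed for $G$ to preserve the disk.
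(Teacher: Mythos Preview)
Your proof is correct and is essentially identical to the paper's own argument: both define $G(\mathbf{s}) = \mathbf{s} - F(\mathbf{s})$, observe that the hypothesis forces $G(\overline{B(0,r)}) \subset \overline{B(0,r)}$, and apply Brouwer's fixed-point theorem to obtain a zero of $F$.
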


\begin{proof}[Proof of Lemma \ref{L:Brouwer}]
    Even though it is likely that this is a classical result, we provide a proof for completeness. Consider the map $G : \mathbf{s} \in \overline{B(0,r)} \mapsto -F(\mathbf{s}) + \mathbf{s}.$
    By assumption, $G(\overline{B(0,r)}) \subset \overline{B(0,r)}$ and $G$ is continuous. So Brouwer fixed-point theorem applies giving the existence of a fixed point for $G$, or equivalently a point $\mathbf{s}$ such that $F(\mathbf{s}) = 0$.
\end{proof}

To apply the lemma, we consider the functions $h \circ \eta_\varepsilon$ on $\overline{B(0,r)}$.
For $\varepsilon$ small enough we see that they are well-defined since then $|\eta_\varepsilon(\mathbf{s})| \le 3R/4$ for all $\mathbf{s} \in B(0,r)$.
Moreover since $h$ is Lipschitz in $B(3R/4)$, we have that $|h(\eta_\varepsilon(\mathbf{s})) - \mathbf{s}| \le L |\eta_\varepsilon(\mathbf{s}) - \eta_0(\mathbf{s})|$ for some constant $L > 0$.
Thus for small enough $\varepsilon$ the right hand side is less than $r$ and the lemma above applies and we have that
$h(\eta_\varepsilon(\mathbf{s})) = 0$ for some $\mathbf{s} \in B(0,r)$, implying that $\eta_\varepsilon(\mathbf{s}) = 0$ and in particular yielding the required $a$.
This concludes the proof of Lemma \ref{L:existence_a}.
\end{proof}

\subsection{Proof of Theorem \ref{th:positivity} assuming Proposition \ref{P:intermediate}}\label{SS:proof_T}

We start with a key intermediate result.

\begin{lemma}\label{L:Phi}
Let $f \in C_c(U,\C)$ not identically zero and $z_0 \in \C$.
By changing $\Gamma$ by a field which agrees with $\Gamma$ on the support of $f$ if necessary, there exist an orthonormal basis $(h_{n})_{n=1}^{\infty}$ of the Cameron--Martin space of $\Gamma$, a constant $C>1$, $a_1, a_2 \in \R$ and a ball $B=B(0,\delta) \subset \R^2$ such that the following holds. Let $A_n = \scalar{\Gamma,h_n}_H, n \geq 1$.
On an event $E \in \sigma(A_n, n \geq 3)$ with positive probability,
the random smooth map
\begin{equation}
    \label{E:map_Phi}
\Phi : (u_{1},u_{2}) \longmapsto \int f(x) e^{i \beta ((a_1+u_{1}) h_{1}(x) + (a_2+u_{2}) h_{2}(x)) + \frac{\beta^2}{2} (h_1(x)^2 + h_2(x)^2) }  :e^{i\beta\sum_{n \geq 3} A_n h_n(x)}: dx
\end{equation}
is a diffeomorphism $B \to \Phi(B)$. Moreover, on the event $E$, $\Phi(\mathbf{u}) = z_{0}$ for some random $\mathbf{u} \in B(0,\delta/2)$, and for all $\mathbf{u} \in B$ the derivative map $D \varphi \colon T_{\mathbf{u}} \reals^{2} \to T_{\Phi(\mathbf{u})} \complexes$ is bounded in norm by $C$ and $1/C$ from above and away from $0$ respectively.
\end{lemma}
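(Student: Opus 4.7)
The plan is to use Proposition~\ref{P:intermediate} to render the residual imaginary chaos $:e^{i\beta\sum_{n\geq 3} A_n h_n}:$ close to a prescribed smooth function $e^{i\beta\chi}$ on $K = \supp f$ with positive probability, so that $\Phi$ becomes uniformly $C^1$-close on $\overline{B}$ to an explicit deterministic map $\varphi_0$. We then design $h_1, h_2$ and $\chi$ via Lemma~\ref{L:existence_a} so that $\varphi_0$ sends $0$ to $z_0$ with invertible derivative.

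By Lemma~\ref{L:CM} we first replace $\Gamma$ with a field that agrees on $\supp f$ and whose Cameron--Martin space $H$ contains $C^\infty_c(\R^d)$. Next, choose smooth compactly supported $h_1, h_2 \in H$, orthonormal in $H$, such that $\|\tilde f\|_{L^1} > |z_0|$ where $\tilde f := f e^{\beta^2(h_1^2+h_2^2)/2}$. This is possible because $H$ does not embed into $L^\infty$: one can make the $L^\infty$-norm of $h_j$ near a point of $\supp f$ arbitrarily large while keeping $\|h_j\|_H = 1$. Applying Lemma~\ref{L:existence_a} to $\tilde f$ yields a smooth compactly supported real $\phi$ with $\int \tilde f\, e^{i\beta\phi}\,dx = z_0$; by using $h_1, h_2$ themselves as the perturbation directions $g_1, g_2$ in the proof of Lemma~\ref{L:existence_a} (together with a suitably generic initial choice of $h_1, h_2$), we may additionally arrange that $\int \tilde f\, h_j\, e^{i\beta\phi}\,dx$, $j=1,2$, are $\R$-linearly independent as elements of $\C$. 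Set $a_1 = a_2 = 0$ and $\chi := \phi$.

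Apply Proposition~\ref{P:intermediate} to the residual field $\tilde\Gamma := \sum_{n\geq 3} A_n h_n$ (after a further use of Lemma~\ref{L:CM} if needed to restore nondegeneracy on $K$, which is safe since only values of $\tilde\Gamma$ on $K$ affect the pairing below) with $\bar F := e^{-i\beta\chi} \in L^\infty$ and a compact $K \supset \supp f$: for any $\eta > 0$, the event $E := \{\|1_K(e^{-i\beta\chi} :e^{i\beta\tilde\Gamma}: - 1)\|_{H^{-d/2-\eps}} \leq \eta\} \in \sigma(A_n, n \geq 3)$ has positive probability. On $E$, pairing $:e^{i\beta\tilde\Gamma}:$ against the smooth compactly supported test function $G_\mathbf{u} := f\, e^{i\beta(u_1 h_1 + u_2 h_2) + \beta^2(h_1^2+h_2^2)/2}$ and its $\mathbf{u}$-derivatives (whose $H^{d/2+\eps}$-norms are uniformly controlled on $\overline{B}$) shows that $\Phi$ is uniformly $C^1$-close on $\overline{B}$, with error $O(\eta)$, to the deterministic map $\varphi_0(\mathbf{u}) := \int \tilde f\, e^{i\beta(\phi + u_1 h_1 + u_2 h_2)}\,dx$. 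By construction $\varphi_0(0) = z_0$ and $D\varphi_0(0)$ is invertible, hence on a sufficiently small ball $B = B(0,\delta)$ the map $\varphi_0$ is a diffeomorphism with $\|D\varphi_0\|, \|D\varphi_0^{-1}\|$ uniformly bounded on $\overline{B}$; for $\eta$ small enough, a standard perturbation argument via the inverse function theorem transfers these properties to $\Phi$ on $E$, yielding in particular $\Phi(\mathbf{u}^*) = z_0$ for some random $\mathbf{u}^* \in B(0,\delta/2)$ and the stated two-sided bounds on $D\Phi$. Finally, extend $h_1, h_2$ to a full orthonormal basis $(h_n)_{n\geq 1}$ of $H$ by Gram--Schmidt.

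The main obstacle is that the three conditions imposed on $(h_1, h_2)$ in the second paragraph --- the amplitude bound $\|\tilde f\|_{L^1} > |z_0|$, orthonormality in $H$, and the non-degeneracy of the two complex integrals above --- interact through the recursive dependence $(h_1, h_2) \mapsto \tilde f \mapsto \phi$ and cannot be addressed in isolation. The resolution lies in the infinite-dimensional freedom in choosing $h_1, h_2 \in C^\infty_c \subset H$ together with the flexibility built into the proof of Lemma~\ref{L:existence_a}, where $\phi$ can be perturbed within the level set $\{\int \tilde f\, e^{i\beta\phi}\,dx = z_0\}$ to achieve the non-degeneracy of the derivatives of $\varphi_0$ at zero.
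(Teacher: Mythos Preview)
Your proposal has two genuine gaps. The more serious one concerns applying Proposition~\ref{P:intermediate} to the residual field $\tilde\Gamma = \sum_{n\geq 3} A_n h_n$. That proposition (and its proof, which invokes \cite[Theorem~4.5]{aru2022density}) requires the field to satisfy the nondegeneracy assumption~\eqref{E:assumption}. Your suggested fix, ``a further use of Lemma~\ref{L:CM} if needed to restore nondegeneracy on $K$'', is circular: the proof of Lemma~\ref{L:CM} itself relies on \cite[Theorem~4.5]{aru2022density} and hence on nondegeneracy. The field $\tilde\Gamma$ is certainly degenerate on $U$, and there is no free argument that it is nondegenerate when restricted to $K$---especially since your $h_1, h_2$ are deliberately concentrated near $\supp f \subset K$. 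The paper confronts exactly this point with a ``restriction trick'': it works on an open $V$ with $\supp f \subset V \subsetneq U$, modifies the auxiliary functions $g_1, g_2, g_3$ on disjoint open sets $W_j \subset U \setminus V$, and then proves by hand (see \eqref{E:Wj} and the end of the proof) that the covariance of the residual field is injective on $L^2(V)$. Your construction has no analogous device.

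The second gap is the circular dependency you yourself flag as the ``main obstacle''. Taking $g_j = h_j$ as the perturbation directions inside the proof of Lemma~\ref{L:existence_a} does not close the loop: those directions are chosen \emph{after} the rough function $\hat a$ so as to make $\eta_0$ a local diffeomorphism at the origin, whereas what you need is the $\R$-linear independence of $\int \tilde f\, h_j\, e^{i\beta\phi}\,dx$ for the \emph{final} output $\phi$, which is obtained by further perturbing $\hat a$ in a way that depends on the chosen directions and then (for $z_0 \neq 0$) interpolating. An appeal to ``infinite-dimensional freedom'' does not constitute a proof. The paper sidesteps this entirely by using three auxiliary functions rather than two: first $g_3$ is fixed via Lemma~\ref{L:existence_a} so that $\int f e^{i\beta g_3 + t} = z_0$; then, because the phase of $f e^{i\beta g_3}$ is nonconstant, one can choose $g_1, g_2$ making $\int f g_j e^{i\beta g_3 + t}$ linearly independent over $\R$; only afterwards are orthonormal $h_1, h_2$ taken to span $\Span\{g_1,g_2\}$, so nondegeneracy of $D\varphi_0(0)$ follows by an invertible linear change of coordinates. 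Decoupling the ``value'' direction $g_3$ from the ``derivative'' directions $g_1, g_2$ is precisely what makes the paper's construction noncircular.
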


\begin{proof}[Proof of Lemma \ref{L:Phi}, assuming Proposition \ref{P:intermediate}]
By Lemma \ref{L:existence_a}, there exist $t \in \R$ and $g_3 \in C^\infty_c(U,\R)$ such that $\arg f + \beta g_3$ is not constant in the support of $f$ and such that
\begin{equation}
\label{E:z0}
\int f(x) e^{i\beta g_3(x) + t} \, dx = z_0.
\end{equation}
Since the phase of $f e^{i \beta g_3}$ is not constant, we can pick two functions $g_1$ and $g_2 \in C_c^\infty(U,\R)$ such that 
\begin{equation}
    \label{E:thetaj}
\int f(x) g_j(x) e^{i \beta g_3(x) +  t} dx = \theta_j, \quad j = 1,2,
\end{equation}
where $\theta_1$ and $\theta_2$ are two complex numbers with modulus one which are not linearly correlated.
Let $V \subset U$ be an open subset of $U$ containing the support of $f$. We can modify the definitions of the functions $g_1, g_2$ and $g_3$ inside $U \setminus V$ without changing the value of the integrals \eqref{E:z0} and \eqref{E:thetaj}. We will in particular assume the existence of three disjoint open sets $W_1, W_2, W_3 \subset U \setminus V$ such that for all $i \neq j$,
\begin{equation}
    \label{E:Wj}
    g_j \mathbf{1}_{W_j} \in C^\infty_c(W_j,\R), \quad \norme{g_j \mathbf{1}_{W_j}}_H > 0 \quad \text{and} \quad g_j \mathbf{1}_{W_i} =0.
\end{equation}
These conditions in particular imply that $g_1, g_2$ and $g_3$ are linearly independent, but will be also useful at the end of the current proof for our ``restriction trick''.

Let us now fix an ON-basis $(h_{n})_{n=1}^{\infty}$ of $H$ with the first two elements $h_{1},h_{2}$ spanning $\Span\{g_{1},g_{2}\}$ and the first three elements $h_1, h_2, h_3$ spanning $\Span\{g_1, g_2, g_3\}$. This can be done thanks to Lemma~\ref{L:CM} (without loss of generality, we may consider another field $\Gamma'$ that agree with $\Gamma$ on the support of $f$ and whose Cameron--Martin space contains $C^\infty_c(\R^d)$).
Because $g_3 \in \Span \{h_1,h_2,h_3\}$, there exist $a_1,a_2,a_3 \in \R$ such that $g_3 = a_1h_1 + a_2h_2 + a_3h_3$.
The map
\[
\varphi_{0} : (u_{1},u_{2}) \in \R^2 \longmapsto \int f(x) e^{i \beta ((a_1 + u_{1}) h_{1}(x) + (a_2+u_{2}) h_{2}(x) + a_3h_3(x)) +  t} \, dx \in \C.
\]
satisfies
\[
\varphi_0(0,0) = z_0
\quad \text{and} \quad
\partial_j \varphi_0(0,0) = i\beta \int f(x) h_j(x) e^{i \beta g_3(x) + t} \, dx, \quad j=1,2.
\]
Because $\theta_1$ and $\theta_2$ are not linearly correlated (see \eqref{E:thetaj}),
$D \varphi_0(0,0)$ is invertible. By the inverse function theorem, there exists a ball $B=B(0,2\delta)$ such that $\varphi_0 : B \to \varphi_0(B)$ is a diffeomorphism. Our main task now is to show that the same is true when we perturb $\varphi_0$ and consider $\Phi$ instead.
It is enough to show that for all $\eta >0$ arbitrarily small, the probability
\begin{equation}\label{E:eta}
    \P( \norme{\Phi-\varphi_0}_{L^\infty(B)} < \eta, \norme{\partial_j \Phi-\partial_j \varphi_0}_{L^\infty(B)} < \eta, j=1,2) > 0.
\end{equation}
We can bound $|\Phi(u_1,u_2) - \varphi_0(u_1,u_2)|$ by
\begin{align*}
    & \abs{ \int f e^{i \beta((a_1+u_1)h_1 + (a_2+u_2)h_2) + t} (e^{i\beta A_3 h_3} - e^{i\beta a_3 h_3})} \\
    & + \abs{ \int f e^{i \beta((a_1+u_1)h_1 + (a_2+u_2)h_2 + A_3h_3)} \Big(e^{\frac{\beta^2}{2}(h_1^2+h_2^2+h_3^2)}:e^{i\beta \sum_{n \geq 4} A_n h_n}: - e^t \Big) } \\
    & \leq \norme{f}_{L^1(U)} e^t \norme{e^{i\beta A_3h} - e^{i\beta a_3h}}_{L^\infty(U)} + \norme{f e^{i \beta((a_1+u_1)h_1 + (a_2+u_2)h_2+A_3h_3)}}_{H^d(\R^d)} \times \\
    & \hspace{150pt} \times \norme{\big(e^{\frac{\beta^2}{2}(h_1^2+h_2^2+h_3^2)}:e^{i\beta \sum_{n \geq 4} A_n h_n}: - e^t \big)\mathbf{1}_V}_{H^{-d}(\R^d)} .
\end{align*}
Similarly, for $j=1,2$, $|\partial_j \Phi(u_1,u_2) - \partial_j \varphi_0(u_1,u_2)|$ is also bounded by the above sum of two terms.
The first term can be made arbitrarily small with positive probability by making $A_3$ close to $a_3$.
Assuming for a moment that we can apply Proposition \ref{P:intermediate} to the field $\sum_{n \geq 4} A_n h_n$ in $V$ (i.e. that it satisfies the assumption \eqref{E:assumption}), we can make the second term arbitrarily small with positive probability conditionally on $A_3$.
This would then show \eqref{E:eta} and conclude the proof of Lemma \ref{L:Phi}.

\textbf{Restriction trick.}
It remains to prove that the field $\sum_{n \geq 4} A_n h_n$ restricted to $V$ satisfies the assumption \eqref{E:assumption} (notice that it would not be the case without the restriction to a subset $V$).
Let
\[
\widetilde C : (x,y) \in V \times V \mapsto \sum_{n \geq 4} h_n(x) h_n(y).
\]
We want to show that $\widetilde C$ is injective on $L^2(V)$.
Let $\tilde f \in L^2(V)$ be such that $\widetilde C \tilde f = 0$. We want to show that $\tilde f = 0$.
Recall the existence of the functions $g_1, g_2, g_3$ and the subsets $W, W_1, W_2 \subset U \setminus V$ introduced in \eqref{E:Wj} and above. Let $j \in \{1,2,3\}$.
Because $\{h_n, n \geq 4\}$ does not span $L^2(W_j)$, there exists $\tilde f_j \in L^2(U)$ vanishing outside of $W_j$ such that $\int \tilde f_j h_n = 0$ for $n \geq 4$ and $\int \tilde f_j g_j \neq 0$. Let $f \in L^2(U)$ be defined as
\[
f = \tilde f \mathbf{1}_V + \sum_{j=1}^3 \lambda_j \tilde f_j,
\quad \text{where} \quad
\lambda_j = - \int \tilde f g_j \Big/ \int \tilde f_j g_j.
\]
Since $\widetilde C \tilde f = 0$,
we have
\begin{align*}
    C f = \sum_{n=1}^3 \Big( \int \tilde f h_n \Big) h_n + \sum_{n=1}^3 \sum_{j=1}^3 \lambda_j \Big( \int \tilde f_j h_n \Big) h_n.
\end{align*}
By definition of $\lambda_j$ and because $g_n$ vanishes on the support of $\tilde f_j$ for $n \in \{1,2,3\} \setminus \{j\}$, we have for all $n =1,2,3$,
\[
\sum_{j=1}^3 \lambda_j \Big( \int \tilde f_j g_n \Big) = - \int \tilde f g_n.
\]
Since $h_1, h_2$ and $h_3$ are linear combinations of $g_1, g_2$ and $g_3$, we deduce that for all $n=1,2,3,$
\[
\sum_{j=1}^3 \lambda_j \Big( \int \tilde f_j h_n \Big) = - \int \tilde f h_n,
\]
implying that $Cf = 0$. By assumption \eqref{E:assumption}, $C$ is injective on $L^2(U)$ and thus $f = 0$. We deduce that $\tilde f = f_{\vert V} = 0$ as desired. This concludes the proof.
\end{proof}

We can now prove:

\begin{proof}[Proof of Theorem \ref{th:positivity}, assuming Proposition \ref{P:intermediate}]
Let $z_0 \in \C$.
By definition \eqref{E:map_Phi} of the map $\Phi$, $\mu(f) = \Phi(A_1-a_1,A_2-a_2)$ a.s. By Lemma \ref{L:Phi}, on the event $E$, if $r>0$ is small enough,
\begin{align*}
  & \P ( \mu(f) \in B(z_{0},r) \vert A_n, n \geq 3 )
  = \P( (A_{1}-a_1,A_{2}-a_2) \in \Phi^{-1}(B(z_{0},r)) | A_n, n \geq 3 ).
\end{align*}
By Lemma \ref{L:Phi} and on the event $E$, the Lebesgue measure of $\Phi^{-1}(B(z_{0},r))$ is bounded from below by $c r^2$ for some deterministic constant $c>0$. It follows that for $r>0$ small enough and on the event $E$,
\begin{equation}
    \P ( \mu(f) \in B(z_0,r) \vert A_n, n \geq 3 ) \geq c r^2.
\end{equation}
We deduce that $\P ( \mu(f) \in B(z_0,r)) \geq c \P(E) r^2$ proving that the limit in \eqref{E:T_limit} is positive.
\end{proof}

\subsection{Proof of Proposition \ref{P:intermediate}}\label{SS:proof_P}

We now turn to the proof of Proposition \ref{P:intermediate}.

\begin{proof}[Proof of Proposition \ref{P:intermediate}]
Let $f: U \to \C$ and $K \Subset U$ be as in the statement of the proposition.
By \cite[Theorem 4.5]{aru2022density} and because $\Gamma$ is nondegenerate (see \eqref{E:assumption}), we may decompose in the compact $K$, $\Gamma = L+R$
as the sum of a Hölder continuous field $R$ and an independent almost $\star$-scale invariant field $L$ whose covariance equals
\[\E[L(x) L(y)] = \int_{0}^\infty k(e^u(x-y))(1 - e^{-\delta u}) \, du.\]
Here, $\delta>0$ is a parameter and $k : \R^d \to \R$ is a seed covariance satisfying the same assumptions as above \eqref{E:seed_k}.
The process $L$ is the limit of the smooth processes $L_t$ as $t \to \infty$ where $L_t$ has the covariance structure
\[\E[L_t(x) L_t(y)] = \int_{0}^t k(e^u(x-y))(1 - e^{-\delta u}) \, du.\]
We now define the approximations, for $T>0$,
\[ \Gamma_T = L_T + R,  \quad \text{and} \quad \Gamma_{T,\infty} = \Gamma - \Gamma_T\]
for the field $\Gamma$ and its tail.

We may bound
\begin{align}
\label{eq:proof_lem_positivity2}
 & \|1_K(f:e^{i \beta \Gamma}: - 1)\|_{H^{-d/2-\eps}(\reals^{d})} \leq X_T+Y_T
\end{align}
where
\[X_T = \|1_K(f:e^{i \beta \Gamma_{T}(x)}: - 1) :e^{i \beta \Gamma_{T,\infty}}:\|_{H^{-d/2-\eps}(\reals^{d})}\]
and
\[Y_T = \|1_K(:e^{i \beta \Gamma_{T,\infty}(x)}: - 1)\|_{H^{-d/2-\eps}(\reals^{d})}.\]
We now deal with $X_T$ and $Y_T$ separately. We first claim that
$\E[Y_T^2 ] \to 0$ as $T \to \infty$.
To show this, we compute
  \begin{align*}
    & \E[Y_T^2 ] = \E[\|\mathbf{1}_K(:e^{i \beta \Gamma_{T,\infty}(\cdot)}: - 1)\|_{H^{-d/2-\eps}(\reals^{d})}^2] \\
    & = \E \int_{\R^d} d\xi (1 + |\xi|^{2})^{-d/2-\eps} \int _{K \times K} dx \, dy (:e^{i\beta \Gamma_{T,\infty}(x)}: - 1) (:e^{-i \beta \Gamma_{T,\infty}(y)}: - 1) e^{-2\pi i \xi \cdot (x-y)} \\
    & = \int_{\R^d} d\xi (1 + |\xi|^{2})^{-d/2-\eps} \int_{K \times K} dx \, dy (e^{\beta^{2} \int_{T}^{\infty} k(e^{s} (x-y)) (1-e^{-\delta s}) \, ds} - 1) e^{-2\pi i \xi \cdot (x-y)}.
  \end{align*}
  Denote by $u_T(\xi)$ the above integral over $K \times K$.
  By dominated convergence theorem, for all $\xi \in \R^d$, $u_T(\xi) \to 0$ as $T \to \infty$.
  Since
  \[
  \sup_{T \geq 0} |u_T(\xi)| \leq \int_{K \times K} dx \, dy (e^{\beta^{2} \int_{0}^{\infty} k(e^{s} (x-y)) (1-e^{-\delta s}) \, ds} - 1) < \infty,
  \]
  we can conclude by dominated convergence theorem that $\E[Y_T^2] \to 0$ as $T \to \infty$ as claimed.
In the rest of the proof, we will pick
$T>0$ large enough so that $\E[Y_T] \leq \eta/4$.

It remains to deal with $X_T$.
By Lemma \ref{L:existence_a}, there exists $t_0 \in \R$ such that for all $t \geq t_0$, there exists $a_t \in C_c^\infty(U,\R)$, such that
\[
\int f e^{\beta^2 \E[R^2]/2} e^{i \beta a_t + t} = 1.
\]
$\E[L_T(x)^2]$ does not depend on $x$ and goes to infinity as $T \to \infty$. We thus also pick $T$ large enough so that $\E[L_T(x)^2]$ exceeds the above value of $t_0$. We can now find
$a \in C^\infty_c(U,\R)$ such that
\[
\int f e^{i \beta a + \beta^2 \E[\Gamma_T^2]/2} = 1.
\]
Since the Cameron--Martin space of $\Gamma_T$ contains $C_c^\infty(U)$ (see the proof of \cite[Lemma 4.8]{aru2022density} for details),
$\Gamma_T$ can be made arbitrarily close to $a$ with positive probability in say $H^d(\R^d)$-norm and $f :e^{i \beta \Gamma_T}: -1$ can be made arbitrarily close to 0. 
Moreover, a computation similar to the computation of $\E[Y_T^2]$ shows that $\E[ X_T^2 \vert \Gamma_{T} ]$ is controlled by $\|f :e^{i \beta \Gamma_T}: -1\|_{H^d(\R^d)}$. Altogether,
this shows that the probability of the event
\[
E := \{
\E[ X_T \vert \Gamma_{T} ] \leq \eta /4 \}
\]
is positive. Wrapping up and by Markov's inequality, we have
\begin{align*}
    \Prob{X_T+Y_T \leq \eta}
    & \ge \Prob{X_T+Y_T \leq \eta, E}
    = \Prob{E} - \Prob{X_T+Y_T > \eta, E} \\
    & \ge \Prob{E} - \E[ \E[X_T+Y_T\vert \Gamma_T] \mathbf{1}_E] / \eta.
\end{align*}
Since $\E[Y_T] \leq \eta/4$ and, on the event $E$, $\E[X_T \vert \Gamma_T] \leq \eta/4$, we have shown that 
\[
\Prob{X_T+Y_T \leq \eta} \geq \P(E)/2 >0.
\]
Together with \eqref{eq:proof_lem_positivity2}, this concludes the proof of Proposition \ref{P:intermediate}.
\end{proof}

\subsection{GFF on the circle}\label{SS:circle}

In this section we briefly explain how one can modify some of the arguments in the proof of Theorem~\ref{th:positivity} in order to be able to treat the case of the total mass of the multiplicative chaos associated to the GFF on the circle.
This field can be explicitly decomposed as
\[
\Gamma(e^{i\theta}) = \sum_{k \geq 1} A_k \frac{\sin(k \theta)}{\sqrt{k}} + B_k \frac{\cos(k \theta)}{\sqrt{k}},
\]
where $A_k, B_k, k \geq 1,$ are i.i.d. standard normal Gaussians.
Because the average of the GFF on the circle vanishes, the resulting field is not nondegenerate in the sense of \eqref{E:assumption}.
However, since the underlying structure is explicit, we can make the appropriate changes.
The following result is the main deterministic result we use instead of Lemma \ref{L:existence_a}.

\begin{lemma}\label{L:sin}
    There exists two open sets $O_1 \subset \R^2, O_2 \subset \C$ with $O_2$ containing the origin such that the map
    \[
    F : (s_1,s_2) \in O_1 \longmapsto \int_0^{2\pi} e^{i (s_1 \sin (\theta) + s_2 \cos(2\theta) )} \, \d \theta
    \]
    is a diffeomorphism from $O_1$ onto $O_2$.
\end{lemma}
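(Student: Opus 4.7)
The plan is to apply the inverse function theorem at a carefully chosen base point. Specifically, I will exhibit a point $(s_1^0, s_2^0) \in \R^2$ at which $F(s_1^0, s_2^0) = 0$ and $DF(s_1^0, s_2^0)$, viewed as an $\R$-linear map $\R^2 \to \C \cong \R^2$, is invertible. Given such a point, the inverse function theorem immediately produces open neighbourhoods $O_1$ of $(s_1^0, s_2^0)$ and $O_2$ of $0 \in \C$ between which $F$ is a diffeomorphism.

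The natural choice is to set $s_2^0 = 0$ and let $s_1^0$ be the first positive zero of the Bessel function $J_0$, because $F(s_1, 0) = \int_0^{2\pi} e^{is_1 \sin\theta}\,d\theta = 2\pi J_0(s_1)$ by the standard integral representation of $J_0$. Then $F(s_1^0, 0) = 0$, and in particular $O_2$ will contain the origin.

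Next I compute the two partial derivatives at $(s_1^0, 0)$ via the Jacobi--Anger expansion $e^{iz\sin\theta} = \sum_{n \in \Z} J_n(z) e^{in\theta}$. Writing $\sin\theta$ and $\cos(2\theta)$ as exponentials and integrating term by term gives
\[
\partial_1 F(s_1^0,0) = i\int_0^{2\pi} \sin\theta \, e^{is_1^0 \sin\theta}\,d\theta = -2\pi J_1(s_1^0),
\qquad
\partial_2 F(s_1^0,0) = i\int_0^{2\pi} \cos(2\theta)\, e^{is_1^0\sin\theta}\,d\theta = 2\pi i\, J_2(s_1^0).
\]
Thus $\partial_1 F(s_1^0,0)$ is a nonzero real number (since $J_0$ and $J_1$ share no zeros, being linearly independent solutions of the Bessel ODE), while $\partial_2 F(s_1^0,0)$ is purely imaginary and also nonzero: using the Bessel recurrence $J_2(x) = \tfrac{2}{x}J_1(x) - J_0(x)$ at $x = s_1^0$ gives $J_2(s_1^0) = \tfrac{2}{s_1^0} J_1(s_1^0) \ne 0$. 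Consequently the two partials are $\R$-linearly independent in $\C$, so $DF(s_1^0,0)$ is invertible.

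I don't foresee a real obstacle here: once one recognises the integrals as Bessel functions, everything reduces to the nonvanishing of $J_1$ and $J_2$ at the first zero of $J_0$, which is a classical fact. The only step requiring any care is the verification that the zeros of $J_0$, $J_1$ and $J_2$ are disjoint at $s_1^0$, for which one can cite the standard interlacing/non-sharing-zeros property of Bessel functions, or simply quote the recurrence above together with $J_0(s_1^0)=0$, $J_1(s_1^0) \neq 0$.
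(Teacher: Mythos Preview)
Your proof is correct and follows essentially the same approach as the paper's: choose the base point $(j_0,0)$ with $j_0$ the first positive zero of $J_0$, identify the partial derivatives of $F$ there with $J_1(j_0)$ and $J_2(j_0)$ (one real, one purely imaginary), invoke the fact that Bessel functions of different orders share no positive zeros, and apply the inverse function theorem. Your write-up is slightly more explicit (Jacobi--Anger and the recurrence $J_2 = \tfrac{2}{x}J_1 - J_0$) and in fact gets the real/imaginary assignment of the two partials the right way around, whereas the paper has them swapped; this is immaterial to the argument.
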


\begin{proof}
    For $n \geq 0$, we will denote by $J_n$ the $n$-th Bessel function of the first kind.
    When $s_2 = 0$, $F(s_1,s_2)$ is explicit and is equal to $2\pi J_0(|s|)$. Let $j_0>0$ be the smallest positive root of $J_0$. One can show that, when $(s_1,s_2) = (j_0,0)$,
    \begin{align*}
        \frac{\partial F(s_1,s_2)}{\partial s_1} = 2 i \pi J_1(j_0)
        \quad \text{and} \quad
        \frac{\partial F(s_1,s_2)}{\partial s_2} = 2 \pi J_2(j_0).
    \end{align*}
    Since $J_1(j_0)$ and $J_2(j_0)$ do not vanish (this is a general fact concerning Bessel functions: the zeros of $J_n$ and $J_m$ are distinct when $n \neq m$), this shows that the determinant of $DF(j_0,0)$ does not vanish. We then conclude by the inverse function theorem.
\end{proof}

We now follow our strategy described in Section \ref{SS:strategy}. We fix $z_0 \in \C$. Let $(h_n)_{n \geq 1}$ be the orthonormal basis of the Cameron--Martin space composed of the functions $\theta \mapsto k^{-1/2} \sin(k \theta)$, $\theta \mapsto k^{-1/2} \cos(k \theta)$, $k \geq 1$.
We order these functions so that $h_1 = \sin(\cdot)$ and $h_2 = 2^{-1/2} \cos(2 \cdot)$. 
Decomposing the GFF on the circle as $\sum A_n h_n$, where $A_n$, $n \geq 1$, are i.i.d. standard Gaussian random variables, we view the total mass of the imaginary chaos as a function of $(A_n)_{n \geq 1}$:
\[
\int_{\S^1} :e^{i \beta \Gamma}: \quad = \quad \psi(A_1, A_2, \dots).
\]
Let $n_0 \geq 1$ be large. For $n=3, \dots, n_0$, let $a_n = 0$ and let $\varphi_0$ be the map
\[
(s_1,s_2) \in \R^2 \longmapsto \E[ \psi(s_1, s_2, a_3, \dots, a_{n_0}, A_{n_0+1}, A_{n_0+2}, \dots) ] = \int_{\S^1} e^{\frac{\beta^2}{2} \sum_1^{n_0} h_n^2} e^{i \beta(s_1 h_1+s_2h_2)}.
\]
Let $K_{n_0} = \frac{1}{2\pi} \int_{\S^1} e^{\frac{\beta^2}{2} \sum_1^{n_0} h_n^2}$.
We take $n_0$ large enough to ensure that:
\begin{itemize}[leftmargin=*]
    \item $K_{n_0} O_2$ contains the ball $B(0,2|z_0|)$ where $O_2 \subset \C$ is the open set from Lemma \ref{L:sin};
    \item $\| e^{\frac{\beta^2}{2} \sum_1^{n_0} h_n^2} - K_{n_0} \|_\infty$ is as small as desired, exploiting that $\sum_1^{n_0} h_n(x)^2$ is asymptotically independent of $x$ (the field $\Gamma$ is rotationally invariant);
    \item the chaos $:e^{i \beta (\sum_{n \geq n_0+1} A_n h_n)}:$ coming from the tail field is close to its expectation in $H^{-1/2-\eps}(\S^1)$-norm, with positive probability.
\end{itemize}
We can then conclude as before. More precisely, thanks to the first two properties, $\varphi_0$ is a small perturbation of the map $K_{n_0} F$ from Lemma \ref{L:sin}. Thus, there exist $(a_1, a_2) \in \R^2$ and a neighbourhood $B$ of $(a_1,a_2)$ such that $\varphi_0 : B \to \varphi(B)$ is a diffeomorphism and $\varphi_0(B)$ is a neighbourhood of $z_0$.
Using the third property, we can then conclude that this property is stable in the following sense.
Let $E \in \sigma(A_n, n \geq 3)$ be the event that $A_n$, $n =3, \dots, n_0$, stays close to 0 and that the $H^{-1/2-\eps}(\S^1)$-norm of $:e^{i \beta (\sum_{n \geq n_0+1} A_n h_n)}:$ is close to its expectation. The event $E$ occurs with positive probability and, on this event, the map
\[
\Phi: (s_1,s_2) \in \R^2 \longmapsto \psi(s_1, s_2, A_3, A_4, \dots)
\]
is also a diffeomorphism $\tilde B \to \Phi(\tilde B)$ where $\Phi(\tilde B)$ contains a neighbourhood of $z_0$. Altogether, this allows us to conclude as before that:
\begin{equation}
    \lim_{r \to 0^+} r^{-2} \P \Big( \Big| \int_{\S^1} : e^{i \beta \Gamma}: - z_0 \Big| < r \Big) > 0.
\end{equation}

\end{document}